\newcommand{\cB}{\mathscr{B}}
\newcommand{\cC}{\mathscr{C}}
\newcommand{\cD}{\mathscr{D}}
\newcommand{\cE}{\mathscr{E}}
\newtheorem{thm}{Theorem}
\newtheorem{lem}{Lemma}
\newtheorem{pro}{Proposition}
\newcommand{\R}{\mathbb{R}}
\newcommand{\Z}{\mathbb{Z}}
\newcommand{\diam}{\operatorname{diam}}
\newcommand{\asdim}{\operatorname{asdim}}
\newcommand{\Ndim}{\operatorname{dim}_{\text{\rm N}}}
\newcommand{\id}{\operatorname{id}}
\begin{document}

\title{Geodesic spaces of low Nagata dimension}
\author{Martina J{\o}rgensen \& Urs Lang}
\date{April 22, 2020}

\maketitle

\begin{abstract}
We show that every geodesic metric space admitting an injective continuous map into the plane as well as every planar graph has Nagata dimension at most two, hence asymptotic dimension at most two. This relies on and answers a question in a very recent work by Fujiwara and Papasoglu. We conclude that all three-dimensional Hadamard manifolds have Nagata dimension three. As a consequence, all such manifolds are absolute Lipschitz retracts.
\end{abstract}


\bigskip
In~\cite{FP}, Fujiwara and Papasoglu show that all planar geodesic metric spaces and planar graphs have asymptotic dimension at most three. Here, a geodesic metric space is said to be {\em planar} if it admits an injective continuous map into $\R^2$, and a not necessarily locally finite (but connected) graph, viewed as a geodesic metric space with edges of length one, is called {\em planar} if it admits an injective map into $\R^2$ whose restriction to every edge is continuous. In fact, Fujiwara and Papasoglu prove the stronger result that the (Assouad--)Nagata dimension, which is greater than or equal to the asymptotic dimension, is at most~3 (see below for the definitions of these notions). This is achieved by showing that for some universal constant $C$, every metric annulus of width comparable to $s > 0$ admits a covering by subsets of diameter at most $Cs$ such that every $s$-ball meets no more than two of them. For an appropriate sequence of annuli covering the underlying space $X$, the union of the individual covers has $s$-multiplicity at most $4$, which gives the bound on the dimension but exceeds the expected value by one.

In this note, we first observe that the bound on the Nagata dimension can be improved to $2$ (see Theorem~\ref{thm:planes}). This answers in particular Question~5.1 in~\cite{FP} for the asymptotic dimension. For the proof, it suffices to combine the above result for metric annuli with a simplified version of the Hurewicz-type theorem from~\cite{BDLM} for the case of a Lipschitz function $f \colon X \to \R$ (see Theorem~\ref{thm:new}). The latter holds in all dimensions, and we provide a streamlined proof. We then take a further step and apply Theorem~\ref{thm:planes} and Theorem~\ref{thm:new} to prove that all 3-dimensional Hadamard manifolds have Nagata dimension~3 (see Theorem~\ref{thm:hadamard}). In dimension two, this was shown in Theorem~5.7.3 in~\cite{S}, but in higher dimensions, previous results depend on additional curvature bounds or homogeneity assumptions (compare~\cite{G, LS}). Finiteness of the asymptotic or Nagata dimension has a number of important consequences (see~\cite{BD2} for a survey). It now follows from~\cite{LS} that all 3-dimensional Hadamard manifolds are absolute Lipschitz retracts (see Theorem~\ref{thm:alr}). Again, this subsumes a number of earlier results with extra conditions (compare~\cite{BuS, LPS}).


\medskip
We now state the relevant definitions. Let $(X,d)$ be a metric space. A collection $\cC$ of subsets of $X$ is called {\em $D$-bounded}, for some constant $D$, if every set $C \in \cC$ has diameter
\[
\diam(C) := \sup\{d(x,y) : x,y \in C\} \le D.
\]
Given $s > 0$, the {\em $s$-multiplicity} of $\cC$ is the infimum of all integers $m \ge 0$ such that every closed $s$-ball in $X$ meets at most~$m$ members of the collection. The {\em asymptotic dimension} $\asdim(X)$ of~$X$ is the infimum of all integers $n \ge 0$ for which there is a function $D \colon (0,\infty) \to (0,\infty)$ such that for every $s > 0$, $X$ possesses a $D(s)$-bounded covering of $s$-multiplicity at most $n+1$~\cite{G}.
The {\em Nagata dimension} or {\em Assouad--Nagata dimension} $\Ndim(X)$ of $X$ is defined analogously with a linear control function $D(s) = cs$~\cite{A, LS}. To make the constant $c > 0$ explicit, we say that $X$ has Nagata dimension $n$ or at most $n$ {\em with constant $c$}. It is an essential feature of the results in~\cite{FP} and those obtained here that they hold {\em uniformly} for all members of the class of spaces considered, that is, with the same constant $c$.

The following equivalent formulation is often useful. Let $\cC$ be a collection of subsets of $X$. For $s > 0$, $\cC$ is called {\em $s$-disjoint\/} if
\[
d(C,C') := \inf\{d(x,x') : x \in C,\,x'\in C'\} \ge s
\]
whenever $C,C' \in \cC$ are distinct. More generally, we say that $\cC$ is {\em $(n+1,s)$-disjoint\/} if $\cC = \bigcup_{i=1}^{n+1} \cC_i$ for subcollections $\cC_i$ that are individually $s$-disjoint. We think of the indices $1,\ldots,n+1$ as {\em colours} of $\cC$. Evidently, if $\cC$ is $(n+1,s)$-disjoint, then $\cC$ has $\lambda s$-multiplicity at most $n+1$ for any $\lambda \in (0,\frac12)$. Hence, a metric space $X$ with an $(n+1,s)$-disjoint and $c's$-bounded cover for every $s > 0$ satisfies $\Ndim(X) \le n$ with any constant $c > 2c'$. Conversely, the following holds (see the proof of Proposition~2.5 in~\cite{LS}).

\begin{pro} \label{pro:colours}
If\/ $X$ is a metric space with a $cs$-bounded cover of $s$-multiplicity at most $n+1$ for some $c,s > 0$, then $X$ also admits an $(n+1,\lambda s)$-disjoint and $c'\lambda s$-bounded cover, where $\lambda,c' > 0$ depend only on $c$ and $n$. In particular, if\/ $X$ has Nagata dimension at most $n$ with constant $c$, then $X$ possesses an $(n+1,s)$-disjoint and $c's$-bounded cover for every $s > 0$. 
\end{pro}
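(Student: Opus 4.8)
The plan is to realize the desired cover as the quantitative pull-back of the canonical map into the first barycentric subdivision of the nerve of a suitable partition of unity; this is Ostrand's colouring theorem made Lipschitz. Write the given cover as $\{B_j\}_{j\in J}$ and set $\rho_j(x) = \max\{0,\,s - d(x,B_j)\}$. Each $\rho_j$ is $1$-Lipschitz with $0 \le \rho_j \le s$, and $\rho_j(x) > 0$ exactly when $d(x,B_j) < s$; in that case $\bar B(x,s)$ meets $B_j$, so the $s$-multiplicity hypothesis shows that at every point at most $n+1$ of the $\rho_j$ are nonzero. Since $\{B_j\}$ covers $X$, every $x$ lies in some $B_j$, whence $\Sigma(x) := \sum_j \rho_j(x)$ satisfies $s \le \Sigma(x) \le (n+1)s$. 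Normalizing, $\psi_j := \rho_j/\Sigma$ is a partition of unity, and a routine quotient estimate using $\Sigma \ge s$ bounds the Lipschitz constant of each $\psi_j$ by $L := (2n+3)/s$.

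Next I would colour by the cardinality of dominant index sets. Fix a margin $\delta \in (0,1)$, to be chosen, and for a finite nonempty $\sigma \subseteq J$ put
\[
V_\sigma := \{x \in X : \psi_j(x) > \psi_{j'}(x) + \delta \text{ for all } j \in \sigma \text{ and } j' \notin \sigma\}.
\]
Assign to $V_\sigma$ the colour $|\sigma| - 1$ and set $\cC_i := \{V_\sigma : |\sigma| = i+1\}$; since $V_\sigma \ne \emptyset$ forces $\sigma$ to lie in the nonzero support of some point, $|\sigma| \le n+1$, so only colours $0,\ldots,n$ occur. The key separation property is that distinct same-colour sets are far apart: if $\sigma \ne \tau$ with $|\sigma| = |\tau|$, choose $j_0 \in \sigma \setminus \tau$ and $j_1 \in \tau \setminus \sigma$; for $x \in V_\sigma$ and $y \in V_\tau$ the function $g := \psi_{j_0} - \psi_{j_1}$ satisfies $g(x) > \delta$ and $g(y) < -\delta$, while $g$ is $2L$-Lipschitz, so $d(x,y) > \delta/L$. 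In particular $V_\sigma \cap V_\tau = \emptyset$, so the members of each $\cC_i$ are genuinely distinct. Moreover $V_\sigma$ is contained in the set where $\rho_j > 0$ for any fixed $j \in \sigma$, hence in the open $s$-neighbourhood of $B_j$, giving $\diam(V_\sigma) \le (c+2)s$.

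The main obstacle is to keep $\{V_\sigma\}$ a cover once the strict margin $\delta$ is imposed: without $\delta$ coverage is trivial (take $\sigma$ to be the set of indices attaining $\max_j \psi_j(x)$), but then one obtains only disjointness, not the quantitative separation the statement demands. To reconcile the two I would argue by a pigeonhole on gaps. At a point $x$, list the positive values among the $\psi_j(x)$ in decreasing order as $v_1 \ge \cdots \ge v_k$ with $k \le n+1$ and $\sum_i v_i = 1$; the consecutive differences $v_1 - v_2, \ldots, v_{k-1} - v_k, v_k$ sum to $v_1 \ge 1/(n+1)$, so at least one of them is $\ge 1/(n+1)^2$. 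Choosing $\delta := 1/\bigl(2(n+1)^2\bigr)$, the indices lying above this gap form a set $\sigma$ with $x \in V_\sigma$, so $\{V_\sigma\}$ does cover $X$. Collecting the estimates, $\bigcup_{i=0}^n \cC_i$ is an $(n+1,\lambda s)$-disjoint and $c'\lambda s$-bounded cover, where $\lambda := \delta/(Ls) = 1/\bigl(2(n+1)^2(2n+3)\bigr)$ depends only on $n$ and $c' := (c+2)/\lambda$ depends only on $c$ and $n$. Finally, if $X$ has Nagata dimension at most $n$ with constant $c$, then applying this at scale $s/\lambda$ (where a $c(s/\lambda)$-bounded cover of $(s/\lambda)$-multiplicity at most $n+1$ exists) produces an $(n+1,s)$-disjoint and $c's$-bounded cover, which is the asserted ``in particular'' statement.
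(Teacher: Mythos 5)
Your argument is correct, and it is worth noting that the paper itself offers no proof of this proposition --- it simply points to the proof of Proposition~2.5 in the cited work of Lang and Schlichenmaier. That proof colours a point $x$ by the cardinality of the set of members of the given cover that are ``close'' to $x$, using a decreasing sequence of nested radii to force the quantitative separation between distinct index sets of the same cardinality; you implement the very same idea, but realize the margin analytically, via a Lipschitz partition of unity $\psi_j = \rho_j/\Sigma$ and a pigeonhole on the gaps in the ordered list of values $\psi_j(x)$. All the quantitative steps check out: the bound $L = (2n+3)/s$ follows from the quotient rule together with the observation that $\Sigma$ is $2(n+1)$-Lipschitz (at most $2(n+1)$ summands change between two points), the separation $d(V_\sigma,V_\tau) > \delta/L$ for $\sigma \ne \tau$ of equal cardinality is exactly right, and the telescoping-gap argument with $\delta = 1/(2(n+1)^2)$ does restore coverage. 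The one loose end is the assertion that $V_\sigma \ne \emptyset$ forces $|\sigma| \le n+1$: your justification (each $j \in \sigma$ satisfies $\psi_j(x) > \delta$, hence lies in the support of $x$) uses the existence of some $j' \notin \sigma$, so it silently excludes the degenerate case $\sigma = J$ with $J$ finite and $|J| > n+1$, where $V_J = X$ vacuously; this is repaired by simply restricting the construction to $\sigma$ with $|\sigma| \le n+1$ from the outset, which costs nothing since the pigeonhole always produces such a $\sigma$. Your self-contained version is arguably more transparent than the nested-radii bookkeeping, at the price of slightly worse (but still purely $n$-dependent) constants.
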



We now proceed to the aforementioned version of the Hurewicz theorem. We need the following lemma extracted from Theorem~2.4 in~\cite{BDLM}.

\begin{lem}\label{lem:more}
Let $X$ be a metric space with an $(n+1,s)$-disjoint and $D$-bounded cover for some $n \ge 1$ and $s,D > 0$. Then there exists an $(n+2,\tfrac{s}{3})$-disjoint and $(D+\tfrac{2}{3}s)$-bounded cover of\/ $X$ with each point $x \in X$ belonging to at least two sets of different colours.
\end{lem}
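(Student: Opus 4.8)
The plan is to enlarge every member of the given cover to its closed $\tfrac{s}{3}$-neighbourhood, keeping its colour, and then to add one further colour that repairs the points left covered only once. Writing $N(C) := \{x \in X : d(x,C) \le \tfrac{s}{3}\}$ with $d(x,C) := \inf_{y \in C} d(x,y)$, I set $\cC_i' := \{N(C) : C \in \cC_i\}$ for $i = 1,\dots,n+1$. Because the sets in $\cC_i$ are pairwise at distance $\ge s$, their neighbourhoods are pairwise at distance $\ge s - 2\cdot\tfrac{s}{3} = \tfrac{s}{3}$, so each $\cC_i'$ is $\tfrac{s}{3}$-disjoint, and the triangle inequality gives $\diam N(C) \le D + \tfrac{2}{3}s$. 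The radius $\tfrac{s}{3}$ is chosen to balance the within-colour separation $s - 2t$ against the buffer $t$ that the enlargement creates for the extra colour below.

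The colours $1,\dots,n+1$ already cover $X$, but a point $x$ may lie in only one of them; call such an $x$ \emph{lonely}, and let $L$ be the set of lonely points. Here $x$ lies in colour $i$ exactly when $d(x,C) \le \tfrac{s}{3}$ for some $C \in \cC_i$. For the final colour I take $\cC_{n+2} := \{C \cap L : C \in \cC,\ C \cap L \ne \emptyset\}$; each such set sits inside a single $C \in \cC$ and so has diameter $\le D$. What remains is to verify the double-coverage property, the diameter and disjointness bounds, and---this is the crux---that $\cC_{n+2}$ is itself a single $\tfrac{s}{3}$-disjoint collection.

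Double coverage is immediate: a point lying in two of the colours $1,\dots,n+1$ is already fine, whereas a lonely $x$ lies in exactly one colour $i_0$, and since $\cC$ covers $X$ it lies in some $C_0 \in \cC$, necessarily with $C_0 \in \cC_{i_0}$ (as $x \in N(C_0)$); then $x \in C_0 \cap L$ puts it in colour $n+2$ as well. The step I expect to carry the weight is the disjointness of $\cC_{n+2}$ between different original colours. Suppose $x \in C \cap L$ with $C \in \cC_i$ and $y \in C' \cap L$ with $C' \in \cC_{i'}$, $i \ne i'$. Since $x$ is lonely with sole colour $i$, it does not lie in colour $i'$, so in particular $x \notin N(C')$; as $N(C')$ is the \emph{closed} $\tfrac{s}{3}$-neighbourhood, this means $d(x,C') > \tfrac{s}{3}$, whence $d(x,y) \ge d(x,C') > \tfrac{s}{3}$. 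Pairs from the same original colour inherit distance $\ge s$, so $\cC_{n+2}$ is $\tfrac{s}{3}$-disjoint and legitimately forms the single colour $n+2$. Combined with the thickened colours $\cC_1',\dots,\cC_{n+1}'$, this produces an $(n+2,\tfrac{s}{3})$-disjoint and $(D+\tfrac{2}{3}s)$-bounded cover in which every point lies in at least two sets of different colours.
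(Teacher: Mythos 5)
Your proof is correct and is essentially the paper's own argument: your set $C\cap L$ coincides exactly with the paper's $C^{\,0}$ (the set $C$ minus the thickened sets of all other colours), and your disjointness and double-coverage checks mirror the ones given there. The only cosmetic difference is that you phrase the extra colour via ``lonely'' points rather than via set subtraction, which makes the separation estimate $d(x,C')>\tfrac{s}{3}$ perhaps slightly more transparent.
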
	

\begin{proof}
By assumption, $X$ has an $(n+1,s)$-disjoint and $D$-bounded cover $\cC = \bigcup_{k=1}^{n+1}\cC_k$. For $C \in \cC$, let $C'$ denote the closed $\frac{s}{3}$-neighbourhood of $C$. Define a new cover $\bigcup_{k=1}^{n+1}\cC_k'$, where $\cC_k' := \{C': C \in \cC_k\}$. Notice that this cover is $(n+1,\tfrac{s}{3})$-disjoint and $(D+\tfrac{2}{3}s)$-bounded. Now define sets of an additional colour as follows. For every $B \in \cC$ of colour $j$, let $B^{\,0}$ be the set $B$ minus the union of all $C' \in \bigcup_{k \ne j}\cC_k'$, and put $\cC_{n+2}' := \{B^{\,0} : B \in \cC\}$. The cover $\cC' := \bigcup_{k=1}^{n+2}\cC_k'$ will have the required
properties.

Clearly, $\cC'$ is still $(D + \frac{2}{3}s)$-bounded, as $\diam(B^{\,0}) \le \diam(B) \le D$ for all $B \in \cC$. To verify that $\cC'$ is $(n+2,\frac{s}{3})$-disjoint, it remains to check that $d(A^0,B^{\,0}) \ge \frac{s}{3}$ whenever $A \in \cC_i$, $B \in \cC_j$, and $A \ne B$. If $i = j$, then $d(A^0,B^{\,0}) \ge d(A,B) \ge s$, because $\cC_i$ is $s$-disjoint. If $i \ne j$, then $A^0 \subset A$ and $B^{\,0} \subset B \setminus A'$ by construction of $\cC_{n+2}'$, thus $d(A^0,B^{\,0}) \ge d(A, B \setminus A') \ge \tfrac{s}{3}$. Now let $x\in X$. There exist $j \in \{1,\ldots,n+1\}$ and $B \in \cC_j$ such that $x \in B$. If $x$ belongs to some set $C' \in \cC_k'$ with $k \ne j$, then $x \in B' \cap C'$. If there is no such set $C'$ containing $x$, then $x \in B' \cap B^{\,0}$. Thus in either case, $x$ belongs to at least two sets of different colours.
\end{proof}

The following result corresponds to Theorem~7.2 in~\cite{BDLM} for the case of a Lipschitz function $f \colon X \to \R$. The argument can also easily be adapted to the asymptotic dimension (see Theorem~1 in~\cite{BD1} for an earlier result).

\begin{thm} \label{thm:new}
Let $X$ be a metric space, and let $f \colon X \to \R$ be a $1$-Lipschitz function. Suppose that there exist $n \ge 1$ and $c > 0$ such that for all $r \in \R$ and $s > 0$, the set $f^{-1}([r,r+s))$ possesses an $(n+1,s)$-disjoint and $cs$-bounded cover. Then $X$ has Nagata dimension at most $n+1$ with a constant depending only on $n$ and $c$.
\end{thm}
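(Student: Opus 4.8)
The plan is to reduce, via the equivalence recorded just before Proposition~\ref{pro:colours}, to producing for every $s > 0$ a single $(n+2,\tau)$-disjoint and $O(s)$-bounded cover of $X$, where $\tau$ is a fixed fraction of $s$ and all implied constants depend only on $n$ and $c$. This yields $\Ndim(X) \le n+1$ with a constant depending only on $n$ and $c$, and because the construction uses the same data at every scale $s$, the control is automatically linear, which is exactly what is needed for the Nagata (as opposed to merely asymptotic) dimension.

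First I would slice the base. Fix $s > 0$ and set $S_m := f^{-1}([ms,(m+1)s))$ for $m \in \Z$; these sets partition $X$. Since $f$ is $1$-Lipschitz, $d(S_m,S_{m'}) \ge s$ whenever $|m-m'| \ge 2$, so in particular the even-indexed slabs are mutually $s$-separated, and likewise the odd-indexed ones. Applying the hypothesis to each $S_m$ gives an $(n+1,s)$-disjoint, $cs$-bounded cover, to which I would apply Lemma~\ref{lem:more} to obtain an $(n+2,\tfrac{s}{3})$-disjoint, $(c+\tfrac{2}{3})s$-bounded cover $\cD^{(m)}$ of $S_m$ in which every point lies in at least two sets of different colours. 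Because same-parity slabs are $s$-separated and each $\cD^{(m)}$ is $\tfrac{s}{3}$-disjoint, the even family $\bigcup_{m\ \text{even}}\cD^{(m)}$ and the odd family $\bigcup_{m\ \text{odd}}\cD^{(m)}$ are each, on their own, $(n+2,\tfrac{s}{3})$-disjoint and $O(s)$-bounded, and each retains the property that every point it covers lies in at least two of its sets of different colours.

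The crux is then to merge these two families into one cover still using only $n+2$ colours: taking their union naively doubles the palette, and the real content of the theorem is the saving of $n$ colours, matching the Hurewicz heuristic $\dim(\text{base}) + \dim(\text{fibres})$. The only obstructions to a common $(n+2)$-colouring are conflicts between sets of adjacent slabs $S_m, S_{m+1}$ near their shared boundary $f = (m+1)s$, since all non-adjacent slabs are already $s$-separated. Here I would exploit the saturation property supplied by Lemma~\ref{lem:more}: because every boundary point is covered with multiplicity at least two within each family, there is enough redundancy to recolour (or discard) the conflicting boundary sets of one family and reabsorb their points, producing a single $(n+2,\tau)$-disjoint cover for a controlled $\tau > 0$. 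I expect this colour-reduction step to be the main obstacle, since one must verify simultaneously that the recoloured family remains both a cover of $X$ and $\tau$-disjoint within each colour class, with $\tau$ and the diameter bound depending only on $n$ and $c$; the conclusion then follows from the equivalence preceding Proposition~\ref{pro:colours}.
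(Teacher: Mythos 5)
Your setup --- slicing $X$ into slabs by $f$, applying the hypothesis and Lemma~\ref{lem:more} slab by slab, and observing that same-parity slabs are $s$-separated --- matches the beginning of the paper's argument, but the step you defer, namely merging the even and odd families into a single $(n+2)$-coloured cover, is precisely where the entire content of the theorem lies, and the mechanism you sketch for it does not work. You propose to use the ``every point lies in two colours'' property from Lemma~\ref{lem:more} to discard or recolour conflicting sets near the interface of adjacent slabs. But that property only gives each point two admissible colours \emph{within its own slab's cover}, whereas a set near the boundary of $S_m$ and $S_{m+1}$ may be within distance $\tfrac{s}{3}$ of sets of \emph{every} colour of the adjacent slab's family, so no free colour need exist; discarding sets risks uncovering a point both of whose witnessing sets are discarded; and re-absorbing points into other sets threatens both the diameter bound and the disjointness. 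You correctly identify this as the main obstacle, but identifying it is not resolving it.

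The paper resolves it with two ideas absent from your proposal. First, the multiplicity-two property is not used for recolouring but is fed into a pigeonhole argument: the odd slabs are taken of width $t=(n+2)s$ (not $s$), and $f^{-1}(I_k)$ is covered by the intersections $C\cap f^{-1}(B)$ where $C$ runs over the $(n+2)$-coloured fibre cover $\cC_k$ (every point in at least $2$ colours) and $B$ over an $(n+2)$-coloured cover $\cB_k$ of the interval $I_k$ itself (every point of $I_k$ in $n+1$ colours). Since $2+(n+1)>n+2$, every point lies in such an intersection with $C$ and $B$ of a \emph{common} colour, and these intersections are $c's$-bounded and $s$-disjoint within each colour because the removed subintervals of length $s$ chop $f^{-1}(I_k)$ into well-separated pieces. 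Second, the even slabs are covered at a strictly larger scale $s'=(c'+2)s$, chosen exactly so that each even set can absorb all small odd pieces of its own colour lying within distance $s$ of it, without damaging either the linear diameter bound or the $s$-disjointness. This deliberate two-scale asymmetry is what makes an $(n+2)$-colouring of the union possible; your single-scale discard-and-recolour scheme has no substitute for it, so the proof as proposed has a genuine gap at its central step.
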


\begin{proof}
Fix $s > 0$ and let $t := (n+2)s$. For every $k \in \Z$, put $I_k := [kt,(k+1)t)$.

For every odd integer $k$, proceed with the following construction. By assumption, $f^{-1}(I_k)$ admits an $(n+1,t)$-disjoint and $ct$-bounded cover. Lemma~\ref{lem:more} now provides an $(n+2,\tfrac{t}{3})$-disjoint and $(c+1)t$-bounded cover $\cC_k = \bigcup_{i=1}^{n+2}\cC_k^i$ with each point $x \in f^{-1}(I_k)$ belonging to two sets of different colours. Let $\cB_k = \bigcup_{i=1}^{n+2}\cB_k^i$ be the $(n+2,s)$-disjoint cover of $I_k$ such that $\cB_k^i$ consists of the connected components of $I_k \setminus [kt + (i-1)s,kt + is)$. Every point of $I_k$ is in $n+1$ sets of pairwise different colours. For $i = 1,\ldots,n+2$, define
\[
\cD_k^i := \bigl\{ C \cap f^{-1}(B) : C \in \cC_k^i,\,B \in \cB_k^i \bigr\}.
\]
Note that if $x \in f^{-1}(I_k)$, then $x$ belongs to two sets of different colours of $\cC_k$, and $f(x)$ is in $n+1$ sets of pairwise different colours of $\cB_k$; thus there exists an index $i \in \{1,\ldots,n+2\}$ such that $x$ belongs to some set $C \cap f^{-1}(B) \in \cD_k^i$. Hence, $\bigcup_{i=1}^{n+2}\cD_k^i$ is a covering of $f^{-1}(I_k)$. Since $\cC_k$ is $(c+1)t$-bounded, this cover is $c's$-bounded for $c' := (n+2)(c+1)$, and since $s \le \frac{t}{3}$ and $f$ is $1$-Lipschitz, $\cD_k^i$ is $s$-disjoint. For $i \in \{1,\ldots,n+2\}$, let $\cD^i := \bigcup_{\text{$k$ odd}}\cD_k^i$. Note that $\cD^i$ is still $s$-disjoint.

Now put $s' := (c'+2)s$ and note that $s' \ge (n+2)s = t$. For every even integer $k$, there exists by assumption an $(n+1,s')$-disjoint and $cs'$-bounded cover $\bigcup_{i=1}^{n+1}\cE_k^i$ of $f^{-1}(I_k)$ by subsets of $f^{-1}(I_k)$. For $i \in \{1,\ldots,n+1\}$, let $\cE^i := \bigcup_{\text{$k$ even}} \cE_k^i$. The union
\[  
\bigcup_{i=1}^{n+1}\,\bigl(\cD^i \cup \cE^i\bigr) \cup \cD^{n+2}
\]
is an $(n+2)$-coloured covering of $X$ which we shall modify to satisfy the required properties. For every $E \in \cE^i$, let $E^*$ be the union of $E$ with all sets of $\cD^i$ at distance $< s$ from $E$. Clearly $\diam(E^*) \le cs' + 2(c'+1)s = c''s$ for some $c'' = c''(n,c)$. We claim that $d(E^*,F^*) \ge s$ whenever $E,F \in \cE^i$ are distinct. If $E$ and $F$ belong to the same family $\cE_k^i$, this holds since $d(E,F) \ge s' \ge 2s + \diam(D)$ for all $D \in \cD^i$ and $\cD^i$ is $s$-disjoint. In the other case, $d(E,F) \ge t \ge s$, and by construction no set $D \in \cD^i$ is at distance $< s$ from both $E$ and $F$, so the claim follows again since $\cD^i$ is $s$-disjoint. In the final covering, the collection of sets of colour $i$ consists of all $E^*$ with $E \in \cE^i$ and the remaining elements of $\cD^i$ not belonging to such an $E^*$. This gives an $(n+2,s)$-disjoint and $c''s$-bounded cover of $X$. Since $s > 0$ was arbitrary, $\Ndim(X) \le n+1$.
\end{proof}


We now turn to planar geodesic spaces. The proof of Theorem~\ref{thm:planes}
below relies on the following result from~\cite{FP}. 

\begin{pro} \label{pro:annuli}
There is a universal constant $c_1$ such that the following holds. Suppose that $X$ is a planar geodesic metric space or a planar graph, and let $z \in X$ be a base point. Then for any $r,t > 0$, the metric annulus $\{x \in X : r \le d(z,x) \le r+t\}$ admits a $c_1t$-bounded cover of $t$-multiplicity at most~2.
\end{pro}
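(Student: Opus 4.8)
The plan is to exploit the planar embedding $\varphi \colon X \to \R^2$ in order to equip the annulus $A = \{x : r \le d(z,x) \le r+t\}$ with an \emph{angular} coordinate, so that $A$ resembles a Euclidean cylinder $S^1 \times [r,r+t]$, and then to cover it by ``vertical sectors'' of angular width comparable to $t$. On a genuine cylinder such a cover has $t$-multiplicity $2$: each sector meets only its two cyclic neighbours, and a connected set of diameter $\le 2t$ can straddle at most one seam between adjacent sectors. The entire difficulty is to extract this picture from a possibly wild geodesic metric space using planarity alone.

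First I would reduce to a single connected component of $A$: two components that are $t$-far apart in $X$ never meet a common closed $t$-ball, so their sectors may reuse the same colours, and the delicate case of metrically close components I would treat by merging them before cutting. On a fixed component I would choose a ``seam'', namely a curve joining the inner boundary $\{d(z,\cdot)=r\}$ to the outer boundary $\{d(z,\cdot)=r+t\}$, such as a subarc of a geodesic $[z,x]$. Pushing the seam into the plane and invoking the Jordan curve theorem, I would argue that it cuts the component into a ``strip'' carrying a genuine order transverse to the seam; running this order cyclically around $A$ yields the angular coordinate. I would then insert further inner-to-outer cut curves $\sigma_1,\sigma_2,\ldots$, spaced so that consecutive cuts are more than $2t$ apart, and take the pieces of the cover to be the closed regions of $A$ bounded by consecutive cuts.

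Two estimates then remain. For the diameter bound $\diam \le c_1 t$, the radial extent is exactly $t$, while the angular extent is controlled because, by planarity, two disjoint inner-to-outer curves bound a region whose points at a common radius are separated by the seams; the crucial geometric input is that adjacent cut curves stay within $O(t)$ of one another throughout the annulus, which is where the planar hypothesis must be used, since two such curves cannot diverge across a planar strip without forcing a forbidden crossing. For the multiplicity bound I would use that a closed $t$-ball $B$ is connected of diameter $\le 2t$; as consecutive cuts are more than $2t$ apart, $B$ can reach at most two adjacent cut curves, and Jordan separation prevents it from touching a non-adjacent sector, so $B$ meets at most two pieces.

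The main obstacle is that the spaces in question need not be doubling --- strikingly so for non-locally-finite planar graphs, where a single $t$-ball may contain arbitrarily many points --- so no net- or packing-based argument can bound the multiplicity, and the value $2$ must come entirely from the one-dimensionality that planarity imposes on the angular direction. Carrying out the seam-cutting and establishing the adjacency of sectors rigorously via the Jordan curve theorem, uniformly over all $r,t>0$ and for both geodesic spaces and graphs (where $\varphi$ is only continuous along edges), is the technical heart of the matter; this is exactly the content established in~\cite{FP}, and it is the step I expect to be by far the most demanding.
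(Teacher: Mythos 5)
You have correctly identified that this statement is not proved in the paper at all: it is precisely the black box imported from Fujiwara--Papasoglu, and the paper's entire ``proof'' is the citation of Lemma~4.4 of~\cite{FP} (applied with $M=10m$, yielding $c_1=10^6$). So your closing deferral to~\cite{FP} coincides with what the authors actually do, and in that sense you have located the right source. But since you present a sketch of an internal argument, it should be assessed as one, and there it has a genuine gap at exactly the step you flag as ``the crucial geometric input.''

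The gap is the diameter bound for the sectors. Choosing inner-to-outer cut curves pairwise more than $2t$ apart and declaring the pieces to be the regions between consecutive cuts does not give $\diam \le c_1t$: nothing forces two consecutive cuts to remain within $O(t)$ of each other at every radius, and even if they did, the region they bound can contain points far from both cuts. The annulus has radial width $t$ but can be folded arbitrarily in the ``angular'' direction; worse, the level sets $\{d(z,\cdot)=r\}$ of a planar geodesic space need not be connected, locally connected, or separating, so the cyclic order and the Jordan-curve ``strip'' structure you posit are not available a priori --- constructing a usable coarse one-dimensional structure on the annulus is the entire content of the proposition. This is also not quite how \cite{FP} proceed: rather than cutting along transversal curves, they build the pieces out of geodesics emanating from the base point and use planarity (Jordan separation applied to bigons formed by two such geodesics together with a short connecting path inside the annulus) to show that the annulus, at scale $t$, coarsely fibers over an interval or a circle, which they then cover by overlapping intervals. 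The half of your argument that does work is the easy half: once sets of diameter $\le c_1t$ arranged along a coarse line or circle with consecutive overlaps are in hand, a closed $t$-ball, being of diameter $\le 2t$, meets at most two of them. Your remark that no doubling or packing argument can substitute for this (especially for non-locally-finite planar graphs) is correct and well taken.
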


See Lemma~4.4 in~\cite{FP} for $M = 10m$. One can take $c_1 = 10^6$.

\begin{thm} \label{thm:planes}
There is a universal constant $c_2$ such that every planar geodesic metric space or planar graph has Nagata dimension at most $2$ with constant $c_2$.  
\end{thm}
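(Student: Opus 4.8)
The plan is to apply Theorem~\ref{thm:new} with $n=1$ to the distance function from a base point. Fix a planar geodesic metric space or planar graph $X$ together with a base point $z \in X$, and define $f \colon X \to \R$ by $f(x) := d(z,x)$; by the triangle inequality $f$ is $1$-Lipschitz. The conclusion of Theorem~\ref{thm:new} supplies a Nagata constant depending only on $n$ and on the boundedness constant of the covers it is fed, so it suffices to establish the following uniform claim: there is a universal constant $c$ such that for every $r \in \R$ and every $s > 0$ the set $f^{-1}([r,r+s))$ admits a $(2,s)$-disjoint and $cs$-bounded cover.

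To prove this claim I would distinguish two cases. For $r \le 0$ the set $f^{-1}([r,r+s))$ is either empty or the ball $\{x : d(z,x) < r+s\}$ of radius at most $s$, whose diameter is at most $2s$; the singleton collection consisting of this one set is then trivially $(2,s)$-disjoint and $2s$-bounded. The substantial case is $r > 0$, where $f^{-1}([r,r+s))$ is contained in the metric annulus $A := \{x : r \le d(z,x) \le r+t\}$ for any width $t \ge s$. Here I would invoke Proposition~\ref{pro:annuli} to obtain a $c_1t$-bounded cover of $A$ of $t$-multiplicity at most $2$, and then apply Proposition~\ref{pro:colours} to $A$ (with $n = 1$ and constant $c_1$) to convert it into a $(2,\lambda t)$-disjoint and $c'\lambda t$-bounded cover of $A$, where $\lambda, c' > 0$ are universal. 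Intersecting each member of this cover with $f^{-1}([r,r+s))$ can only decrease diameters and increase mutual distances, so it yields a cover of the preimage of the same type.

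The one delicate point is the matching of scales. Proposition~\ref{pro:colours} produces disjointness at the reduced scale $\lambda t$ rather than at $t$, whereas Theorem~\ref{thm:new} requires disjointness precisely at scale $s$. I would resolve this by choosing the width $t := s/\min\{\lambda,1\}$, which satisfies both $t \ge s$ (so that $A$ still contains the preimage) and $\lambda t \ge s$ (so that the cover is a fortiori $(2,s)$-disjoint), while keeping the diameter bound $c'\lambda t \le c'\max\{1,\lambda\}\,s$ a universal multiple of $s$. This rescaling is the main, though modest, obstacle; everything else is a matter of tracking universal constants. Setting $c$ equal to the maximum of $2$ and this boundedness constant covers both cases uniformly, and Theorem~\ref{thm:new} then yields $\Ndim(X) \le 2$ with a universal constant $c_2$, independent of the particular space.
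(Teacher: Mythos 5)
Your proposal is correct and follows essentially the same route as the paper: set $f := d(z,\cdot\,)$, feed the annulus covers of Proposition~\ref{pro:annuli} through Proposition~\ref{pro:colours}, rescale the annulus width so that the $(2,\lambda t)$-disjointness lands at scale $s$, and apply Theorem~\ref{thm:new} with $n=1$. The only (harmless) difference is that you spell out the trivial case $r \le 0$ and the restriction of the cover from the closed annulus to $f^{-1}([r,r+s))$, which the paper leaves implicit.
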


\begin{proof}
Let $X$ be a planar geodesic metric space or a planar graph, let $z \in X$ be a base point, and put $f := d(z,\cdot\,)$. By Proposition~\ref{pro:annuli}, for any $r,t > 0$, the set $f^{-1}([r,r+t])$ admits a $c_1t$-bounded cover of $t$-multiplicity at most~2. It follows from Proposition~\ref{pro:colours} that $f^{-1}([r,r+t])$ possesses a $(2,\lambda t)$-disjoint and $c_1'\lambda t$-bounded cover, where $\lambda$ and $c_1'$ depend only on $c_1$. If $\lambda < 1$, put $s := \lambda t$ and $c := c_1'$. If $\lambda \ge 1$, put $s := t$ and $c := c_1'\lambda$. In either case, for any $r,s > 0$, the set $f^{-1}([r,r+s])$ has a $(2,s)$-disjoint and $cs$-bounded cover. Now the result follows from Theorem~\ref{thm:new}.
\end{proof}  


We proceed to complete, simply connected Riemannian manifolds of nonpositive sectional curvature.

\begin{thm} \label{thm:hadamard} 
There is a universal constant $c_3$ such that every 3-dimensional Hadamard manifold has Nagata dimension $3$ with constant $c_3$.    
\end{thm}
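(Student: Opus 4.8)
The plan is to invoke the Hurewicz-type theorem (Theorem~\ref{thm:new}) for a well-chosen $1$-Lipschitz function on $X$, using Theorem~\ref{thm:planes} to control its fibers. As $X$ is a Riemannian $3$-manifold, small metric balls are bi-Lipschitz to Euclidean balls and hence $\Ndim(X) \ge 3$; the content is therefore the reverse inequality $\Ndim(X) \le 3$. Fix a geodesic ray in $X$ and let $f \colon X \to \R$ be its Busemann function, a convex $1$-Lipschitz map. By Theorem~\ref{thm:new} with $n = 2$, it suffices to equip every horospherical slab $f^{-1}([r,r+s))$ with a $(3,s)$-disjoint, $cs$-bounded cover, the constant $c$ being independent of $X$, $r$, and $s$. (Working instead with the distance function $f = d(z,\cdot\,)$, one is led to distance spheres; these are only topological $2$-spheres, so one would first have to cut them into two planar disks.)

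To build such a cover I would exploit that $X$ is CAT$(0)$. The horoballs $\{f \le \rho\}$ are convex, so the gradient flow of $f$ toward the point at infinity is distance-nonincreasing. Pushing the slab onto the single horosphere $H := f^{-1}(r)$ produces a map $P$ whose fibers have diameter $\le s$ and which satisfies $d(Px,Py) \le d(x,y) + s$; consequently a $(3,2s)$-disjoint, $O(s)$-bounded cover of $H$ in the ambient metric pulls back under $P$ to a $(3,s)$-disjoint, $O(s)$-bounded cover of the slab. This reduces everything to covering one horosphere at scale $s$ with multiplicity three and uniformly bounded diameter. Now Theorem~\ref{thm:planes} applies to $H$: it is diffeomorphic to $\R^2$ and, in its intrinsic length metric $d_{\mathrm{int}}$, is a complete planar geodesic space, hence admits a $(3,L)$-disjoint, $c_2 L$-bounded cover at every scale $L>0$ with the universal constant $c_2$ of Theorem~\ref{thm:planes}.

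The main obstacle is to transfer these intrinsic covers to the ambient metric $d_{\mathrm{amb}} := d|_H$ with a constant independent of the manifold. Since $d_{\mathrm{amb}} \le d_{\mathrm{int}}$, diameters transfer for free; the difficulty lies entirely in the multiplicity, because an ambient $s$-ball on $H$ may be intrinsically enormous. Indeed, on a horosphere in the model space of curvature $-\kappa^2$ one has $d_{\mathrm{int}} = \tfrac{2}{\kappa}\sinh\!\big(\tfrac{\kappa}{2}\,d_{\mathrm{amb}}\big)$, and as Hadamard manifolds admit no lower curvature bound there is no universal modulus bounding $d_{\mathrm{int}}$ in terms of $d_{\mathrm{amb}}$. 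What rescues the argument is the scale-invariance of Nagata dimension: choosing the intrinsic scale $L = L(s)$ so that ambient $s$-balls sit inside intrinsic $L$-balls, the same logarithmic relation compresses the ambient diameter of an intrinsically $c_2 L$-bounded set back to $O(s)$, with a constant that stays bounded as $\kappa \to \infty$. Turning this model computation into a spatially uniform statement across regions of varying curvature---so that a cover of $H$ at a single intrinsic scale achieves both ambient multiplicity three and ambient diameter $\le Cs$---is the crux, and I expect it to hinge on the convexity of horoballs together with the $1$-Lipschitz nearest-point projections, which should reduce the two-sided comparison to a single uniform estimate.
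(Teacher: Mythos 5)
Your skeleton coincides with the paper's: a Busemann function $f$, Theorem~\ref{thm:new} with $n=2$, Theorem~\ref{thm:planes} applied to horospheres in their intrinsic metric, and a $1$-Lipschitz nearest-point projection. But the step you yourself flag as ``the crux'' --- converting an intrinsic cover of a horosphere $H$ into one with ambient multiplicity $3$ and ambient diameter $O(s)$ --- is a genuine gap, and the fix you sketch cannot work. There is no single intrinsic scale $L(s)$: in a region where the horosphere is nearly flat you need $L = O(s)$ to keep the ambient diameter of intrinsically $c_2L$-bounded sets at $O(s)$, while in a region of very negative curvature you need $L$ arbitrarily large for ambient $s$-balls to fit inside intrinsic $L$-balls. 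Since Hadamard manifolds carry no uniform curvature bounds and the curvature varies from point to point, these two requirements are incompatible, and convexity of horoballs does not produce the missing two-sided comparison between $d_{\mathrm{int}}$ and $d_{\mathrm{amb}}$ on $H$ --- none exists.

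The paper sidesteps the comparison entirely by never measuring anything in the ambient metric of the horosphere. Given $r$ and $s$, it takes $H := f^{-1}\{r-\frac{s}{2}\}$, a horosphere offset \emph{below} the target slab $A' := f^{-1}([r,r+s])$, covers $H$ by a $(3,s)$-disjoint, $c_2's$-bounded family in the intrinsic metric $d_H$, and pulls this back by the nearest-point projection $\pi \colon A \to H$, where $A := f^{-1}([r-\frac{s}{2},r+s])$ carries its own induced inner metric $d_A$. Since $\pi$ does not increase lengths of curves, it is $1$-Lipschitz from $(A,d_A)$ to $(H,d_H)$, so the sets $\pi^{-1}(B)$ form a $(3,s)$-disjoint, $(c_2'+3)s$-bounded cover of $(A,d_A)$. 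The key observation is that on $A'$ one has $d \le d_A$ with equality whenever $d(x,y) \le s$: along the ambient geodesic from $x$ to $y$, convexity of $f$ gives $f \le r+s$, and the $1$-Lipschitz property gives $f \ge \frac{f(x)+f(y)-d(x,y)}{2} \ge r-\frac{s}{2}$, so the geodesic stays inside $A$. Hence intersecting the cover with $A'$ yields a $(3,s)$-disjoint, $(c_2'+3)s$-bounded cover for the ambient metric $d$, which is exactly what Theorem~\ref{thm:new} requires; all disjointness is certified in $d_A$, never in the ambient metric of $H$. Note also that the $\frac{s}{2}$ offset is essential: projecting onto $f^{-1}\{r\}$ itself, as you propose, short ambient geodesics between points of the slab can dip below it, and the small-scale equality $d = d_A$ fails.
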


\begin{proof}
Let $(X,d)$ be a 3-dimensional Hadamard manifold. Fix a Busemann function $f \colon X \to \R$. Recall that $f$ is $1$-Lipschitz, all horoballs $f^{-1}((-\infty,r])$ are convex, and the nearest point retraction from $X$ onto any horoball is $1$-Lipschitz. Furthermore, every horosphere $f^{-1}\{r\}$ is homeomorphic to $\R^2$, and the induced inner metric on $f^{-1}\{r\}$ is finite and complete, hence geodesic. (In fact, Busemann functions and horospheres are $C^2$, see Proposition~3.1 in~\cite{HI}, but this is not needed here.) By Theorem~\ref{thm:planes}, every $f^{-1}(r)$ has Nagata dimension at most~2 with constant $c_2$. Now fix $r \in \R$ and $s > 0$. By Proposition~\ref{pro:colours} there is a universal constant $c_2'$ such that the horosphere $H := f^{-1}\{r-\frac{s}{2}\}$, with the induced inner metric $d_H$, possesses a $(3,s)$-disjoint and $c_2's$-bounded cover $\cB = \bigcup_{i=1}^3\cB_i$ by subsets of $H$. Equip $A := f^{-1}([r-\frac{s}{2},r+s])$ with the induced inner metric $d_A$, and let $\pi \colon A \to H$ be the nearest point projection. Note that $\pi$ does not increase the length of curves, thus $\pi$ is $1$-Lipschitz also with respect to $d_A$ and $d_H$. The sets $\pi^{-1}(B)$ with $B \in \cB$ form a $(3,s)$-disjoint and $(c_2' + 3)s$-bounded cover of $(A,d_A)$. Furthermore, by taking the intersections with $A' := f^{-1}([r,r+s])$, we get a cover of $A'$ that is $(3,s)$-disjoint and $(c_2' + 3)s$-bounded with respect to $d$, because $d(x,y) \le d_A(x,y)$ for all $x,y \in A'$, with equality if $d(x,y) \le s$. Since $r \in \R$ and $s > 0$ were arbitrary, it follows from Theorem~\ref{thm:new} that $X$ has Nagata dimension at most 3 with a universal constant~$c_3$. In fact, $\Ndim(X) = 3$, as $X$ is locally bi-Lipschitz homeomorphic to $\R^3$.
\end{proof}

A metric space $X$ is an {\em absolute $C$-Lipschitz retract}, for a constant $C \ge 1$, if for every isometric embedding $e$ of $X$ into another metric space $Y$ there is a $C$-Lipschitz retraction of $Y$ onto $e(X)$. Equivalently, for every metric space $W$ and every $\lambda$-Lipschitz map $f \colon V \to X$ with $V \subset W$, there exists a $C\lambda$-Lipschitz extension $\bar f \colon W \to X$ of $f$ (see Proposition~1.2 in~\cite{BeL}). It was shown in~\cite{LPS} that Hadamard manifolds with pinched negative sectional curvature, homogeneous Hadamard manifolds, and all 2-dimensional Hadamard manifolds are absolute Lipschitz retracts (in the last case, one can take $C = 4\!\sqrt{2}$). Furthermore, according to~\cite{BuS}, the universal cover of every closed Riemannian 3-manifold of nonpositive curvature has this property. Combining Theorem~\ref{thm:hadamard} with Theorem~1.6 from~\cite{LS} (the case $f = \id$ on $Y = Z$), we can now settle the 3-dimensional case completely.

\begin{thm} \label{thm:alr}
There is a universal constant $c_4$ such that every 3-dimensional Hadamard manifold is an absolute $c_4$-Lipschitz retract.   
\end{thm}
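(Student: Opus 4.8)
The plan is to derive Theorem~\ref{thm:alr} as an essentially immediate consequence of the two results already cited: Theorem~\ref{thm:hadamard}, which supplies the finiteness of the Nagata dimension with a universal constant, and Theorem~1.6 from~\cite{LS}, which converts bounded Nagata dimension into the absolute Lipschitz retract property. So the proof is not a fresh argument but a matching of hypotheses.

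First I would recall the statement of Theorem~1.6 in~\cite{LS} in the special case $f = \id$ on $Y = Z$ that the preceding paragraph singles out. That theorem asserts, for a metric space of finite Nagata dimension $n$ with constant $c$, the existence of a Lipschitz extension (equivalently, by the Proposition~1.2 reformulation in~\cite{BeL}, a Lipschitz retraction) whose constant is bounded by a function of $n$ and $c$ alone. The key point to verify is that Theorem~\ref{thm:hadamard} delivers exactly what this input requires: every 3-dimensional Hadamard manifold $X$ has $\Ndim(X) \le 3$ (indeed $=3$) with one and the same universal constant $c_3$, independent of the particular manifold. Since both $n = 3$ and $c = c_3$ are then universal, the retraction constant produced by~\cite{LS} is itself a universal constant, which I would name $c_4 = c_4(3, c_3)$.

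The second step is to record why this yields the stated conclusion in the precise language of absolute $c_4$-Lipschitz retracts. Given any isometric embedding $e \colon X \to Y$, I would apply the extension form of the \cite{LS} result to the identity map on $e(X)$, viewed as a $1$-Lipschitz map into the target $X$ (via $e^{-1}$), extending it over all of $Y$; pulling back through $e$ gives the desired $c_4$-Lipschitz retraction of $Y$ onto $e(X)$. The equivalence between the retraction formulation and the extension formulation is exactly Proposition~1.2 in~\cite{BeL}, already invoked in the definition, so no independent verification is needed here.

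I do not expect a genuine mathematical obstacle, since all the analytic content has been absorbed into Theorem~\ref{thm:hadamard} and into the quoted result of~\cite{LS}. The only point demanding care is \emph{uniformity}: I must ensure that the constant $c_3$ in Theorem~\ref{thm:hadamard} does not depend on the manifold, so that $c_4$ is truly universal rather than manifold-dependent. This is precisely the uniform feature emphasized in the introduction, and it is guaranteed because every step leading to $c_3$—Proposition~\ref{pro:annuli}, Proposition~\ref{pro:colours}, Theorem~\ref{thm:planes}, and Theorem~\ref{thm:new}—carries universal constants. Thus the mild ``hard part,'' if any, is bookkeeping rather than proof: confirming that the dependence chain $c_1 \rightsquigarrow c_2 \rightsquigarrow c_3 \rightsquigarrow c_4$ introduces no hidden dependence on $X$.
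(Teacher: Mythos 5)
Your proposal matches the paper exactly: the paper offers no separate proof of Theorem~\ref{thm:alr} beyond the sentence preceding it, which likewise just combines the uniform bound $\Ndim(X) \le 3$ with constant $c_3$ from Theorem~\ref{thm:hadamard} with Theorem~1.6 of~\cite{LS} (in the case $f = \id$ on $Y = Z$) and notes that the resulting retraction constant depends only on the dimension and on $c_3$, hence is universal. The only hypothesis of the cited extension theorem you leave tacit --- Lipschitz $m$-connectedness of $X$ with uniform constants --- is immediate from convexity of Hadamard manifolds (geodesic contraction to a point), so there is no gap.
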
  


\bigskip\noindent
\texttt{martinaj@student.ethz.ch\\
lang@math.ethz.ch}\medskip\\
Department of Mathematics\\
ETH Z\"urich\\
R\"amistrasse 101\\
8092 Z\"urich\\
Switzerland

\end{document}